\author{Alfredo R. Freire}
\thanks{The author wants to thanks Joel Hamkins for fruitful discussions on relations between finite set theory and arithmetic during the time under his supervision, Hugo Mariano, Rodrigo Freire and Vincent Peluce for many suggestion on the organization of the paper and for their detailed revision. The author has been supported by BEPE-FAPESP grant 2017/21020-0 during the time the ideas in this paper were developed.}
\title{Does set theory really ground arithmetic truth?} 
\newtheorem{defi}{Definition}
\newtheorem{theo}{Theorem}
\newtheorem{coro}{Corollary}
\newtheorem{prop}{Proposition}
\newcommand{\so}{\rightarrow}
\renewcommand{\see}{\leftrightarrow}
\newcommand{\M}{\mathcal{M}}
\newcommand{\N}{\mathcal{N}}
\newcommand{\Lg}{\mathcal{L}}
\newcommand{\pair}[1]{\langle #1 \rangle}
\begin{document}
\maketitle

\begin{abstract}
We consider the foundational relation between arithmetic and set theory. 
Our goal is to criticize the construction of standard arithmetic models as providing grounds for arithmetic truth (even in a relative sense). 
Our method is to emphasize the incomplete picture of both theories and treat models as their syntactical counterparts. 
Insisting on the incomplete picture will allow us to argue in favor of the revisability of the standard model interpretation. 
We then show that it is hopeless to expect that the relative grounding provided by a standard interpretation can resist being revisable.
We start briefly characterizing the expansion of arithmetic `truth' provided by the interpretation in a set theory.
Further, we show that, for every well-founded interpretation of recursive extensions of PA in extensions of ZF, the interpreted version of arithmetic has more theorems than the original. This theorem expansion is not complete however.
We continue by defining the coordination problem. 
The problem can be summarized as follows. 
We consider two independent communities of mathematicians responsible for deciding over new axioms for ZF and PA. 
How likely are they to be coordinated regarding PA’s interpretation in ZF?
We prove that it is possible to have extensions of PA not interpretable in a given set theory ST. 
We further show that the probability of a random extension of arithmetic being interpretable in ST is zero.
\end{abstract}

\section{Introduction}

In this article we study the idea of reducing Arithmetic to Set theory as a strategy for grounding arithmetic truth. The method of reduction we have in mind is \textbf{interpretation}. A brief discussion about this method will be provided in the next section. All proofs provided in this article are elementary.
Our purpose is to raise questions and suggest an different view of this situation, rather then hard mathematical results.

\begin{defi}
An \textbf{interpretation} of $\Lg_{T_1}$ in $\Lg_{T_2}$ is a mapping $I$ of formulas of $\Lg_{T_1}$ in $\Lg_{T_2}$ such that: 

If $\alpha, \beta \in \Lg_{T_1}$ and $P$ is a n'ary predicate in $\Lg_{T_1}$, then the transformation $*$ is such that
\begin{enumerate}
	\item $(\lnot \alpha)^* = \lnot \alpha^*$.
	\item $(\alpha \lor \beta)^* = \alpha^* \lor \beta^*$.
	\item $P^*$ is a fixed formula of $T_2$ with at most n free variables.
	\item $(\forall x (\alpha))^* = \forall x (U(x) \rightarrow \alpha^*)$, being $U$ a fixed formula with one free variable in $T_2$.
\end{enumerate}

Being $x_1, x_2, \ldots, x_n$ all free variables occurring in $\alpha$,

\begin{equation*}
(\alpha)^I = U(x_1) \land U(x_2) \land \ldots \land U(x_n) \so \alpha^*
\end{equation*}

We say $I$ is an interpretation of the theory $T_1$ in the theory $T_2$ if for every theorem $\varphi$ of $T_1$, the formula $\varphi^I$ is a theorem of $T_2$.
\end{defi}

We start our investigation by studying the capability of models to provide grounds for a theory. Even though our conceptualization of models is such that each formula is either satisfied or not by a given model, our ability to determine which option is the case is limited. The reason is that if the only thing we know about $V$ is that it satisfies ZF, then we can determine that $V \vDash \varphi$ if, and only if, $ZF \vdash \varphi$.

This is the reason why we may consider models as their syntactical representation via interpretations. Each model definable in a given base model $V \vDash ZF$ can be said be to the result of bounding the elements of $V$ to a given interpretation $I$. By doing so, we can keep in mind our limited knowledge of the models. Since, if $\M$ is definable in $V$ (i.e $\M = I^V$) and we do not know any other information about $V$ other than it satisfies ZF, then 
\begin{equation}
\text{We know } \M \vDash \varphi 
\text{ if, and only if, }
ZF \vdash \varphi^I
\end{equation}

Further, we investigate the grounding relation represented by interpreting PA in ZF.  Notably, the standard interpretation expands what may be considered true for arithmetic -- i.e. many independent formulas in PA become theorems as we see them in ZF through the interpretation. We show that this expansion occurs for any well-founded interpretation between PA and ZF. 

\begin{theo}\label{thm:expansion}
Every well founded interpretation $I$ of a recursive extension A of PA in an extension S of ZF is such that there is an undecidable formula $\varphi$ in A that is interpreted as theorem in S under the interpretation.
\end{theo}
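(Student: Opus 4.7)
The plan is to take $\varphi$ to be the arithmetized consistency statement $\mathrm{Con}(A)$, and to argue that it is undecidable in $A$ while $\mathrm{Con}(A)^I$ is a theorem of $S$.

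First I will establish $S \vdash \mathrm{Con}(A)^I$. The key is that because $I$ is well-founded, $S$ proves the interpreted arithmetic $\mathbb{N}^I$ is isomorphic to the genuine $\omega$ of any ambient model. Combined with the fact that $S$ proves (via the interpretation) each axiom of $A$ holds in $\mathbb{N}^I$, together with the recursive presentation of $A$ and ZF's capacity to formalize a satisfaction predicate for arithmetic sentences over the set $\omega$, one obtains $S \vdash \mathrm{Con}(A)$. Translating through $I$ and using well-foundedness once more gives $S \vdash \mathrm{Con}(A)^I$.

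Next I will show $\mathrm{Con}(A)$ is undecidable in $A$. That $A \not\vdash \mathrm{Con}(A)$ is G\"odel's second incompleteness theorem, applicable since $A$ is a recursively axiomatized extension of PA and is consistent --- its consistency following from its interpretability in $S$ (assuming $S$ is consistent). For $A \not\vdash \neg \mathrm{Con}(A)$, I argue by contradiction: if $A \vdash \neg \mathrm{Con}(A)$ held, applying $I$ would give $S \vdash \neg \mathrm{Con}(A)^I$, contradicting the first step and the consistency of $S$.

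I expect the main obstacle to lie in that first step, specifically in passing from the external scheme ``$S \vdash \psi^I$ for every axiom $\psi$ of $A$'', which is all the definition of interpretation directly supplies, to the uniform internal statement required for $\mathrm{Con}(A)^I$. The bridge is to exploit the recursiveness of $A$, so that the set of axioms is $S$-definable, together with the well-foundedness of $I$, which allows $S$ to apply the standard arithmetic truth predicate to the genuine set $\omega$ and thereby reduce the verification of the axioms of $A$ in $\mathbb{N}^I$ to set-theoretic statements $S$ can handle uniformly.
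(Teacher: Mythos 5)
Your reduction of the theorem to the two claims ($S \vdash \mathrm{Con}(A)^I$, and $\mathrm{Con}(A)$ undecidable in $A$) is the natural first attempt, and the undecidability half is handled correctly; but the first claim is a genuine gap, and in fact it is false under the stated hypotheses. The definition of interpretation supplies only the external schema ``for each axiom $\psi$ of $A$, $S \vdash \psi^I$''. Well-foundedness does let $S$ prove that $\pair{U_I, <}$ is isomorphic to $\omega$ and hence, axiom by axiom, that each $\psi$ is true in $\omega$; but for an infinitely axiomatized $A$ nothing converts this into the single internal statement ``every axiom of $A$ is true in $\omega$'', which is what $\mathrm{Con}(A)$ requires. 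Concretely, take $S = \mathrm{ZF}$, let $\mathrm{Con}_n(\mathrm{ZF})$ be the bounded sentence ``no proof of $0=1$ from ZF has G\"odel number below $n$'', and let $A = PA \cup \{\mathrm{Con}_n(\mathrm{ZF}) : n \in \omega\}$. Each $\mathrm{Con}_n(\mathrm{ZF})$ is a true bounded sentence, hence already a theorem of PA, so $A$ is deductively identical to PA: it is a consistent recursive extension of PA and the standard (well-founded) interpretation interprets it in ZF. Yet ZF proves $\lnot\mathrm{Con}(\mathrm{ZF}) \so \lnot\mathrm{Con}(A)$ (an inconsistency proof with G\"odel number $m$ makes $\lnot\mathrm{Con}_{m+1}(\mathrm{ZF})$ a true $\Sigma_1$ sentence, hence PA-provable, contradicting an axiom of $A$), so $\mathrm{ZF} \vdash \mathrm{Con}(A) \so \mathrm{Con}(\mathrm{ZF})$ and therefore $\mathrm{ZF} \nvdash \mathrm{Con}(A)^I$ by the second incompleteness theorem. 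Your chosen witness thus fails for this $(A,S,I)$, even though the theorem's conclusion still holds there (e.g.\ via $\mathrm{Con}(PA)$).

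The paper sidesteps exactly this obstacle by not exhibiting the witness: it assumes for contradiction that $I$ is conservative ($S \vdash \varphi^I$ iff $A \vdash \varphi$), transports this assumption through the Kaye--Wong bi-interpretation of PA with $ZF_{fin}$, and uses well-foundedness plus a Mostowski collapse to place the interpreted structure inside $V_\omega$ and extract a violation of the second incompleteness theorem; the undecidable $\gamma$ with $S \vdash \gamma^I$ is then obtained non-constructively from the failure of conservativity. If you want to keep your direct strategy, you must either restrict to extensions $A$ that are finitely axiomatized over PA (where ``$\omega$ models $A$'' really is a single ZF-provable statement, since ZF proves the full induction scheme for $\omega$ via its truth predicate) or add the hypothesis that $S$ proves the formalized statement ``$I$ interprets $A$ in $S$''; neither is given by the theorem as stated.
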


\newtheorem*{expansion}{Theorem~\ref{thm:expansion}}

But, even though we expect that interpretations of PA in ZF expand arithmetical truth, an extension of ZF does not completely decide on arithmetical formulas:

\begin{theo}\label{thm:incomplete}
For any recursive extension S of ZF and any interpretation I (for instance, the standard interpretation) there is an arithmetical formula that S does not decide under the interpretation.
\end{theo}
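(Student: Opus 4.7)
The plan is to pull the interpretation back to arithmetic and invoke Gödel's first incompleteness theorem. Assume $S$ is consistent, otherwise the statement is vacuous. Define the arithmetical theory
\[
T_I := \{\varphi \in \Lg_{PA} : S \vdash \varphi^I\},
\]
consisting of every arithmetical formula whose $I$-translation $S$ proves. The argument would then reduce to showing that $T_I$ is a consistent, recursively enumerable extension of PA, so that Gödel's theorem applies directly to it.

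Verifying the three hypotheses is routine. $T_I$ extends PA because, by the definition of interpretation given in the paper, every PA-theorem $\varphi$ satisfies $S \vdash \varphi^I$ and therefore $\varphi \in T_I$. $T_I$ is recursively enumerable because $S$ is recursive: one enumerates $S$-proofs and keeps those whose conclusion has the syntactic shape $\varphi^I$ for some arithmetical $\varphi$. Finally, $T_I$ is consistent, because if $T_I \vdash 0 \neq 0$ then $S \vdash (0 \neq 0)^I$, while we also have $S \vdash (0 = 0)^I$ from the PA-theorem $0 = 0$; together these would contradict the consistency of $S$. With all three hypotheses in place, the Gödel--Rosser incompleteness theorem produces an arithmetical sentence $\varphi$ with $T_I \nvdash \varphi$ and $T_I \nvdash \neg\varphi$, which unpacks to $S \nvdash \varphi^I$ and $S \nvdash (\neg \varphi)^I$, as required.

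There is no deep obstacle here; the theorem is essentially Gödel's first incompleteness theorem viewed through the lens of the interpretation, and it complements Theorem~\ref{thm:expansion} cleanly --- the latter says that $I$ always adds new arithmetical theorems, this one says $I$ can never decide them all. The only care required is to use Rosser's formulation of incompleteness, so as to avoid appealing to $\omega$-consistency; this lets the argument go through for any recursive consistent extension $S$ of ZF, whatever its set-theoretic strength, and for any interpretation $I$ of a recursive extension of PA in $S$ (which in particular covers the standard interpretation highlighted in the statement).
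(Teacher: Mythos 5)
Your proposal is correct and matches the paper's own argument: both pull the theory back along $I$ to $\{\varphi : S \vdash \varphi^I\}$, observe that this is a consistent recursively enumerable extension of PA (the paper makes the r.e. bound explicit via the recursive predicate ``$x$ is an $S$-proof ending in $\varphi^I$''), and then apply the G\"odel--Rosser theorem. Your explicit check of consistency of the pulled-back theory, and the remark that the hypothesis that $S$ be consistent is needed, are welcome details the paper leaves implicit.
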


\newtheorem*{incomplete}{Theorem~\ref{thm:incomplete}}

At any stage in the development of ZF (a recursive extension), the concept of arithmetical truth will still be open. This is a consequence of the theorem \ref{thm:incomplete}, as some arithmetic formulas will be undecidable under the interpretation in the extended set theory. Hence it is possible to build two structures satisfying the set theory that disagree about the truth value of an arithmetic formula.

It is due to this phenomena that we consider what I call \textbf{the coordination problem}: consider that there are two groups of mathematicians responsible for deciding over new axioms. The first will decide over axioms for arithmetic and the second for set theory. How should we consider the relation between the two groups? Note that if we consider that the arithmetic group should conform to any development provided by the set theory group, it becomes hard to see in what sense the interpretation of arithmetic into set theory have any foundational role. This framework is indistinguishable from simply taking arithmetic to live in set theory.

Therefore, it is important to consider the possibility of the coordination between the two theories to break. Is it possible that an extension of arithmetic not to be interpretable in any extension of a given extension of set theory? We show this is the case with the result:

\begin{theo}\label{thm:noninterpretable}
For any recursive extension A of PA and given that S is an extension of ZF, there is a extension A$^+$ of A that is not interpretable in S.
\end{theo}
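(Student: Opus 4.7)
The plan is to apply Gödel's second incompleteness theorem to an arithmetical extension of $A$ that forces any putative interpretation in $S$ to prove too much about itself. Take $A^+ = A + \mathrm{Con}(S)$, where $\mathrm{Con}(S)$ is the standard $\Pi^0_1$ arithmetical sentence expressing the consistency of $S$ (well-defined since $S$ is recursive). First suppose we are in the generic case where $A$ does not disprove $\mathrm{Con}(S)$, so that $A^+$ is a consistent recursive extension of $A$.

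I would then argue by contradiction: suppose $A^+$ is interpretable in $S$ via some interpretation $I$. The key observation is that interpretations preserve consistency provably in PA. Concretely, given an explicit $I$, any $A^+$-proof of $\bot$ translates, formula by formula, to an $S$-proof of $\bot$, and this translation formalizes in PA to yield
\[
\mathrm{PA} \vdash \mathrm{Con}(S) \to \mathrm{Con}(A^+).
\]
Since $A^+$ extends PA and contains $\mathrm{Con}(S)$ itself as an axiom, we conclude $A^+ \vdash \mathrm{Con}(A^+)$, contradicting Gödel's second incompleteness theorem applied to the recursive consistent theory $A^+$.

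It remains to handle the pathological case $A \vdash \neg\mathrm{Con}(S)$, in which $A^+$ as just defined is inconsistent. Here $A$ is $\Sigma_1$-unsound (assuming $S$ consistent), but a non-interpretable extension can still be obtained by replacing $\mathrm{Con}(S)$ with an arithmetical consistency statement that $A$ does not decide, such as an iterated $\mathrm{Con}(A + \mathrm{Con}(A))$, or by a Rosser-style diagonalization across the recursive enumeration of interpretations of $A$-extensions into $S$. I expect this corner case to be the main obstacle: the generic case reduces cleanly to Gödel's theorem via the conjunction of a proof-translation lemma and the second incompleteness theorem, but making the argument robust against $A$'s possible $\Sigma_1$-unsoundness requires genuine care, since one cannot then simply assert $\mathrm{Con}(S)$ as the diagonal sentence.
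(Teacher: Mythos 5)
Your approach --- forcing non-interpretability with the single sentence $\mathrm{Con}(S)$ --- is genuinely different from the paper's, which diagonalizes against an enumeration $I_1, I_2, \ldots$ of all interpretations of $\Lg_{PA}$ in $\Lg_{ZF}$, at each stage adding either a sentence that defeats $I_i$ or a G\"odel sentence, and then takes the union. Your route is more elegant where it applies, but it has three genuine gaps. First, the theorem does not assume $S$ is recursively axiomatized (the paper writes ``recursive'' for $A$ but only ``consistent extension of ZF'' for $S$); without that, $\mathrm{Con}(S)$ is not an arithmetical sentence at all, whereas the diagonalization needs no such hypothesis. Second, the pivotal step --- that the mere \emph{existence} of an interpretation of $A^+$ in $S$ yields $\mathrm{PA} \vdash \mathrm{Con}(S) \to \mathrm{Con}(A^+)$ --- is asserted rather than proved, and it is not a routine formalization: to translate an $A^+$-proof of $\bot$ inside PA one must prove, uniformly, that every axiom of the infinite theory $A^+$ goes to a theorem of $S$, a $\Pi_2$ statement whose truth does not entail its PA-provability. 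This is repairable: restrict to a finite fragment $F = \mathrm{I}\Sigma_1 + \mathrm{Con}(S) \subseteq A^+$; the finitely many facts ``$S \vdash \alpha^I$'' are true $\Sigma_1$ sentences and hence provable in $F$, so $F \vdash \mathrm{Con}(S) \to \mathrm{Con}(F)$, whence $F \vdash \mathrm{Con}(F)$ and G\"odel's second theorem applies to the finite consistent theory $F$. But that repair is a necessary piece of the proof, not a side remark.

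Third, the corner case $A \vdash \neg\mathrm{Con}(S)$ is left open, and the patches you sketch do not close it. The theorem is only non-trivial if $A^+$ is required to be consistent (otherwise $A + (0=1)$ settles everything, and the paper's own proof explicitly verifies consistency of its $A^*$), and replacing $\mathrm{Con}(S)$ by iterated consistency statements over $A$ need not produce a non-interpretable theory: for example, $\mathrm{PA} + \mathrm{Con}(\mathrm{PA})$ \emph{is} interpretable in ZF via the standard interpretation, and the same holds for its finite iterates. Your final fallback, a ``Rosser-style diagonalization across the recursive enumeration of interpretations,'' is essentially the paper's entire argument, so invoking it concedes that the $\mathrm{Con}(S)$ route does not by itself establish the theorem. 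As it stands, the proposal proves a correct and interesting special case --- for $S$ recursively axiomatized and $A \nvdash \neg\mathrm{Con}(S)$, the consistent theory $A + \mathrm{Con}(S)$ is not interpretable in $S$ --- but not the stated theorem.
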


\newtheorem*{noninterpretable}{Theorem~\ref{thm:noninterpretable}}

\begin{theo}\label{thm:probabilityzero}
For every consistent extension S of ZF, the probability that a random consistent extension of a S-standard version of PA is interpretable in S is zero.
\end{theo}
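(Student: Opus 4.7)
The plan is to place a natural non-atomic Borel probability measure $\mu$ on the space $X$ of complete consistent extensions of the $S$-standard version of PA (call it $T^*$), and then observe that the subset of $X$ consisting of theories interpretable in $S$ is countable and therefore $\mu$-null. First I would fix an enumeration $\varphi_0, \varphi_1, \ldots$ of the sentences of $\Lg_{PA}$ and form the tree $\mathcal{T}$ whose nodes at level $n$ are tuples $(\psi_0, \ldots, \psi_{n-1})$ with $\psi_i \in \{\varphi_i, \neg\varphi_i\}$ satisfying that $T^* + \psi_0 + \cdots + \psi_{n-1}$ is consistent; the branches of $\mathcal{T}$ are in natural bijection with $X$. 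The measure $\mu$ would be defined by a standard tree-recursion: at a node with both children present in $\mathcal{T}$ I split its mass evenly between them, and at a node with a unique child I transfer all its mass to that child. Carath\'eodory extension then produces a Borel probability measure on $X$.

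The key step is to verify that $\mu$ is non-atomic. A branch $b$ could have positive $\mu$-measure only if it contained only finitely many true binary splits, which means that beyond some level $N$ the finite extension $T^* + \psi_0 + \cdots + \psi_{N-1}$ decides every subsequent $\varphi_n$ and so is complete. But that finite extension is a consistent recursively enumerable extension of PA, and by G\"odel's theorem no such extension is complete. Hence every branch is a $\mu$-null singleton, and $\mu$ is atomless.

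Finally I would count the interpretable elements of $X$. Each interpretation $I$ is specified by a finite tuple of formulas of $\Lg_{ZF}$, so the collection of interpretations is countable. For each $I$, set $T_I = \{\psi \in \Lg_{PA} : S \vdash \psi^I\}$. If $T \in X$ is interpretable in $S$ via $I$, then $\psi \in T$ gives $S \vdash \psi^I$, and by completeness of $T$ the cases $\psi \in T$ and $\neg\psi \in T$ together force $T = T_I$. Thus the interpretable points of $X$ form a countable set, which by atomlessness of $\mu$ has measure zero, proving the theorem. The principal obstacle is the atomlessness step, which requires threading G\"odel's theorem along every branch; once this is established, the counting of interpretations and the measure-theoretic conclusion are routine.
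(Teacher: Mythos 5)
Your proposal is correct and its combinatorial core coincides with the paper's: both arguments rest on the observation that interpretations form a countable set, that Gödel incompleteness forces a full binary tree of pairwise incompatible consistent extensions (hence continuum many of them), and that each interpretation can accommodate at most one theory from such a pairwise incompatible family. Where you genuinely depart from the paper is in what "probability" means. The paper never specifies a measure; it builds its tree by iterating Gödel sentences, observes that the branches are in bijection with $P(\omega)$ while only countably many can be interpretable, and concludes "probability zero" directly from the cardinality comparison --- a step that is not literally valid for an arbitrary measure, since a probability measure can concentrate on a countable set. You close exactly that gap: you put an explicit atomless Borel measure on the Stone space of complete consistent extensions of $T^*$ via the even-splitting tree recursion, and your atomlessness argument (a positive-measure branch would yield a finite, hence recursively enumerable, complete consistent extension of PA, contradicting Gödel) is the substantive new content. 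Your identification $T = T_I$ for complete interpretable $T$ is also a cleaner version of the paper's "the same interpretation cannot accommodate incompatible theories." Two caveats you share with the paper rather than introduce: the argument needs $S$ (hence $T^*$) to be recursively axiomatizable for incompleteness to apply, even though the statement only says "consistent extension"; and your sample space is the complete consistent extensions, whereas the paper's is a particular family of (not necessarily complete) extensions --- the theorem statement is informal enough that either reading is defensible, and yours handles the $S$-standard version directly rather than deferring it to a corollary.
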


\newtheorem*{probabilityzero}{Theorem~\ref{thm:probabilityzero}}

It is indeed possible to generate the extension A$^+$ for any given S. But, how likely is it to be the case? If we consider that any consistent extension of arithmetic is equally likely at any stage in the development of the arithmetical group, then the probability that a development of arithmetic is interpretable in a given extension of ZF is zero.

\section{The standard model of arithmetic}

The strategy of offering set theoretical models for describing objects of a theory comes from the works of Tarski, Mostowsky, and Robinson in the 1940s \cite{tarski1953}. Ever since this date, mathematicians and philosophers often resort to this strategy. It is generally accepted that once we start talking about models, we put aside the formal aspects of the mathematical subject and start talking about its objects and truths. Nevertheless, because of G\"odel's incompleteness theorem and L\"owenhein-Skolem theorem, there is no formal way to fix the model of any recursive extension of Peano arithmetic. It is impossible to say that the only model that satisfies our descriptions of arithmetic is the intended model, no matter how extensively we describe arithmetic. Still, using a set theoretical apparatus, we can describe the intended model as $N = \pair{\omega, +,., 0, s}$ (called the \textit{standard model}). We can then show that a set theory like ZF is expressive enough to define a truth predicate for this interpretation.

The literature on this subject generally presents two approaches for fixing the standard model: (i) one should offer extra-logical (or second-order) reasons for choosing $N$ from the myriad possible models for arithmetic; (ii) one should abandon the model-theoretical construction and find other ways to ground arithmetic truth. A renewed version of (ii) can be seen in Gabbay's defense of a new kind of formalism \cite{gabbay2010formalist}; Moreover, others may abandon a privileged emphasis on $N$, because we must focus on mathematical practice (Ferreirós \cite{ferreiros2015mathematical}) or because we must commit ourselves to a realistic multiverse (Hamkins \cite{hamkins2012set}). Still, differences of opinion are more common as to how and why we should follow project (i). Those like Williamson \cite{williamson2016absolute} argue for metaphysical reasons for setting $N$, others like Maddy \cite{maddy2014second}, Quine \cite{quine1964ontological} or Putnam \cite{putnam1967mathematics} advocate ways to naturalize the reasons for $N$. Finally, a recent approach grounds $N$ in the mathematical practice using a normative basis in place of the Platonist commitment with $N$ \cite{freire2012existence}.

The question of the adequacy of $N$ is often overlooked. The assumption behind this is that if something is a model of arithmetic, then it is $N$. We may not know why this is the intended model or even deny that such a model exists, but conformity to $N$ is hardly questioned. However, presenting $N$ as an object without further consideration is a category mistake. Notably, a similar category mistake would be to say that `it has been two sun revolutions since so and so'. The phrase `two sun revolutions' is used as quantity of time, even though it describes a movement in reference to the sun. Hence, the statement would be a category mistake unless an implicity reference to Earth is assumed -- and not Mars, for instance.  Precisely stated, $N$ is an interpretation of PA in the language of membership. It represents therefore a construction of objects for arithmetic in terms of objects of a given set theory. Hence, it is only when we fix the objects for a set theory that the objects expressed in the construction $N$ gain life. This idea is what we call from now on \textbf{relative grounding}.

For any given model of set theory $V \vDash ZF$, the interpretation $N$ can be understood as a procedure for obtaining a model $\N$ for PA. The model $\N = \pair{Obj, +, ., 0, s}$ is build from the interpretation $N = \pair{U, f_+, f_., f_s, Zero}$ as follows:
\begin{enumerate}
	\item $Obj = \{x \in V \mid V \vDash U(x)\}$.
	\item $0^\N = a$ such that $V \vDash Zero(a)$.
	\item $+^\N = \{\pair{x, y, z} \mid x, y, z \in Obj \text{ and } 
					V \vDash f_+(x, y) = z\}$.
	\item $.^\N = \{\pair{x, y, z} \mid x, y, z \in Obj \text{ and } 
					V \vDash f_.(x, y) = z\}$.
	\item $s^\N = \{\pair{x, y} \mid x, y \in Obj \text{ and } 
					V \vDash f_s(x) = y\}$.
\end{enumerate}

Our idea is to insist on the incomplete picture of the set theoretical representation of arithmetic. We note that $V$ suffers from the same problem as $N$, for it is based in a incomplete theory ZF. Therefore, the picture of arithmetic obtained from reducing PA to $V$ by $N$ may also be incomplete. 

So to what are we committing in case we say $N$ is the standard model of arithmetic? It seems like the single construction for the intended model of arithmetic is based on the idea condensed in the sentence: ``no matter which model of set theory one is assuming, the model of arithmetic would be given by $N$''. Indeed, the picture provided by the literature is that of \textit{\textbf{revisable} truth for set theory and arithmetic -- but \textbf{unrevisable} reduction of arithmetic in set theory}. In the next sections, we argue that for taking the standard model to have a foundational role one should assume the interpretation to be revisable. For now, we consider the characterization of arithmetic in set theory in more details.

\subsection{Foundational characterization of PA in ZF} 

Being N the standard interpretation of arithmetic in ZF, we call the set $A^{ZF}_N = \{\varphi \mid ZF \vdash \varphi^N\}$ the expansion of arithmetic truth under the interpretation. Indeed some undecidable formulas $\varphi$ of PA are `true' in the standard model ($ZF \vdash \varphi^N$). This is the case for the G\"odel formula, Goodstein's theorem and many others. We will thus consider more broadly the question of expansion of arithmetic truth from interpretations in set theories. 

Given that $I$ is an interpretation of an arithmetic A in a set theory S and $Th(A) = \{\varphi \mid A \vdash \varphi\}$, we expect to have $Th(A)  \subsetneqq A^{S}_I \subsetneqq$ Arithmetic truth, as we see in the figure:

\vspace{0,5cm}
\begin{center}
\tikzset{every picture/.style={line width=0.75pt}} 

\begin{tikzpicture}[x=0.75pt,y=0.75pt,yscale=-1,xscale=1]
\draw  [fill={rgb, 255:red, 74; green, 144; blue, 200 }  ,fill opacity=0.5 ] (191.77,119.98) .. controls (191.77,70.55) and (255.12,30.48) .. (333.28,30.48) .. controls (411.44,30.48) and (474.79,70.55) .. (474.79,119.98) .. controls (474.79,169.4) and (411.44,209.47) .. (333.28,209.47) .. controls (255.12,209.47) and (191.77,169.4) .. (191.77,119.98) -- cycle ;
\draw  [fill={rgb, 255:red, 74; green, 144; blue, 200 }  ,fill opacity=0.5 ] (202.37,120.85) .. controls (202.37,86.37) and (243.95,58.41) .. (295.24,58.41) .. controls (346.52,58.41) and (388.1,86.37) .. (388.1,120.85) .. controls (388.1,155.33) and (346.52,183.28) .. (295.24,183.28) .. controls (243.95,183.28) and (202.37,155.33) .. (202.37,120.85) -- cycle ;
\draw  [fill={rgb, 255:red, 74; green, 144; blue, 200 }  ,fill opacity=0.5 ] (211.9,122.03) .. controls (211.52,96.14) and (232.2,74.85) .. (258.09,74.47) .. controls (283.98,74.09) and (305.28,94.77) .. (305.66,120.66) .. controls (306.04,146.55) and (285.36,167.84) .. (259.47,168.22) .. controls (233.58,168.6) and (212.28,147.92) .. (211.9,122.03) -- cycle ;
\draw (250.39,121.41) node  [font=\normalsize]  {$A\ \vdash \alpha$};
\draw (342.7,121.41) node  [font=\normalsize]  {$S\ \vdash \alpha^{I}$};
\draw (430.48,121.41) node  [font=\small]  {$ \begin{array}{l}
Arithmetic\ \\
Truth
\end{array}$};
\end{tikzpicture}
\end{center}
\vspace{0,5cm}

We start considering the expansion of arithmetic truth in case we have a well founded interpretation.

\begin{defi}
Let $x < y$ be the arithmetical relation $\exists z (z \neq 0 \land x + z = y)$.
The interpretation $I$ of an arithmetic $A$ in a set theory $S$ is well founded if $S$ proves that (i) for every subset $x$ of $U_I$, there is a $<$-minimal element in $x$ and that (ii) for every $y$, $\{z \mid z < y\}$ is a set.
\end{defi}

\begin{expansion}
Let $A$ be a consistent recursive extension of PA and $S$ a consistent extension of ZF. We further assume that there is a well founded interpretation I of A in S. Then there is a formula $\varphi$ which is undecidable in $A$ such that $S \vdash \varphi^I$. In other words, arithmetical truth is expanded under the interpretation I of A in S.
\end{expansion}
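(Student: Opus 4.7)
The plan is to identify $\varphi$ with the arithmetical consistency statement $\mathrm{Con}(A)$, available in the language of $A$ since $A\supseteq\mathrm{PA}$. First I would use well-foundedness to prove, inside $S$, that the interpreted structure $(U_I, 0^I, s^I, +^I, \cdot^I)$ is isomorphic to the standard model $(\omega, 0, s, +, \cdot)$ of $S$. Inside $S$, define $f\colon\omega\to U_I$ by the recursion $f(0)=0^I$ and $f(n+1)=s^I(f(n))$. For surjectivity, suppose some $x\in U_I$ lies outside $\mathrm{range}(f)$: condition (ii) ensures that $X=\{y\in U_I : y\le^I x,\ y\notin\mathrm{range}(f)\}$ is a set, and condition (i) yields a $<^I$-minimum $x_0\in X$; since $x_0\neq 0^I=f(0)$, the interpreted PA-axiom ``every nonzero number is a successor'' furnishes $y_0<^I x_0$ with $x_0=s^I(y_0)$, and minimality of $x_0$ forces $y_0=f(n)$ for some $n$, whence $x_0=f(n+1)\in\mathrm{range}(f)$, a contradiction. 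Injectivity and preservation of $+^I$, $\cdot^I$ follow by standard inductions from the interpreted recursive PA-axioms. In particular $U_I$ is a set in $S$, isomorphic to $\omega$.

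Second, I would derive $S\vdash\mathrm{Con}(A)$. Because $U_I$ is a set, $S$ admits a Tarskian satisfaction predicate $\mathrm{Sat}_{U_I}$, and for each axiom $\psi$ of $A$ the interpretation yields $S\vdash\psi^I$, which via the definitional equivalence $\psi^I\leftrightarrow\mathrm{Sat}_{U_I}(\lceil\psi\rceil)$ gives $S\vdash\mathrm{Sat}_{U_I}(\lceil\psi\rceil)$. Combining the $\Delta_1$-definability of $\mathrm{Ax}_A$ with the uniform verification of the induction schema afforded by the isomorphism with $\omega$, one internalizes this into the single $S$-theorem $\forall\psi\,(\mathrm{Ax}_A(\psi)\to\mathrm{Sat}_{U_I}(\psi))$. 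Internal soundness of first-order logic (a ZF-level theorem) then yields $S\vdash\mathrm{Con}(A)$, which via the isomorphism transports to $S\vdash\mathrm{Con}(A)^I$.

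Third, I would check that $\mathrm{Con}(A)$ is undecidable in $A$. G\"odel's second incompleteness theorem, applicable since $A$ is a consistent recursive extension of PA, gives $A\not\vdash\mathrm{Con}(A)$. For the other direction, if $A\vdash\neg\mathrm{Con}(A)$ then by the interpretation $S\vdash(\neg\mathrm{Con}(A))^I$, which combined with $S\vdash\mathrm{Con}(A)^I$ contradicts the consistency of $S$. Hence $\varphi:=\mathrm{Con}(A)$ is undecidable in $A$ while $S\vdash\varphi^I$, as required.

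The main obstacle is the internalization step of the second paragraph: converting the meta-schema ``$S\vdash\psi^I$ for every axiom $\psi$ of $A$'' into the single universally quantified $S$-theorem to which internal soundness can be applied. Well-foundedness is what makes this possible, by forcing $U_I$ to be a set isomorphic to the standard $\omega$ of $S$ and thereby making Tarskian satisfaction and uniform induction over $\omega$ available; in its absence $U_I$ could be a nonstandard model in which the interpreted axioms are true but whose internal $\mathrm{Con}(A)$ fails.
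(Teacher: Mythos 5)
Your first step is sound: the two well-foundedness conditions do let $S$ prove that $(U_I,0^I,s^I,+^I,\cdot^I)$ is isomorphic to its own $\omega$, by exactly the recursion-plus-minimal-counterexample argument you give. The gap is in the second step, and it sits precisely at the point you flag as ``the main obstacle''. The isomorphism with $\omega$ gives you uniform verification of the axioms of \emph{PA} (finitely many axioms plus the induction schema, which $S$ verifies uniformly via $\mathrm{Sat}_\omega$). But $A$ is an arbitrary recursive extension of PA, and for its additional axioms you only have the metatheoretic schema ``$S\vdash\mathrm{Sat}_\omega(\lceil\psi\rceil)$ for each axiom $\psi$''. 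Passing from this schema to the single $S$-theorem $\forall\psi\,(\mathrm{Ax}_A(\psi)\to\mathrm{Sat}_\omega(\psi))$ is a uniform reflection step that neither well-foundedness nor $\Delta_1$-definability of $\mathrm{Ax}_A$ licenses; nothing forces $S$ to see, internally, that all the (internally coded, possibly nonstandard-indexed) axioms of $A$ are true in $\omega$.

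The failure is not hypothetical. Take $S=\mathrm{ZF}$ and $A=\mathrm{PA}+\{\theta_n : n\in\omega\}$, where $\theta_n$ says ``no $\mathrm{ZF}$-proof of $0=1$ has code below $n$''. Each $\theta_n$ is a true bounded sentence, hence a theorem of PA, so $A$ is deductively just PA: it is consistent, recursively axiomatized, and the standard interpretation $N$ is a well-founded interpretation of $A$ in $\mathrm{ZF}$. Yet $\mathrm{ZF}\vdash \mathrm{Con}(A)\to\mathrm{Con}(\mathrm{ZF})$: arguing inside $\mathrm{ZF}$, a proof of $0=1$ with code $p$ makes $\lnot\theta_{p+1}$ a true $\Sigma_1$ sentence, hence PA-provable, while $\theta_{p+1}$ is an axiom of $A$. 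By G\"odel's second incompleteness theorem, $\mathrm{ZF}\nvdash\mathrm{Con}(A)$, so the conclusion of your second paragraph, $S\vdash\mathrm{Con}(A)^I$, is false for this instance even though every hypothesis of the theorem holds. (The theorem itself survives here --- $\mathrm{Con}(\mathrm{PA})$ is a witness --- but your chosen witness $\varphi=\mathrm{Con}(A)$ is not, since here $\mathrm{Con}(A)$ is undecidable in $A$ \emph{and} unprovable in $S$ under $I$.) Note also that the paper argues quite differently: it assumes the interpretation is conservative, passes through the Kaye--Wong bi-interpretation of PA with finite set theory and a Mostowski collapse into $V_\omega$, and derives $S'\vdash\mathrm{Con}(S')$ as the contradiction; your route, if the internalization could be repaired (say, by restricting to the PA part or to extensions whose extra axioms $S$ verifies uniformly), would be more direct but proves a weaker statement than the one claimed.
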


\begin{proof}
Kaye and Wong prove that PA is bi-interpretable with finite set theory ($ZF_{fin}$) in \cite{kaye2007interpretations}\footnote{To understand this proof, it is sufficient to know that $T_1$ and $T_2$ are bi-interpretable if (i) both theories interpret each other and (ii) the composition of the interpretation is equivalent to an identity interpretation. An extensive treatment of the bi-interpretation phenomenon in set theories can be found in \cite{hamkinsFreire2020biinterpretation}.}.
Since $PA$ is bi-interpretable with $ZF_{fin}$, there is a recursive extension $S'$ of $ZF_{fin}$ bi-interpretable by $B$ with $A$. Lets suppose $S$ is conservative for $A$ under the interpretation $I$:
\begin{equation}\label{conserv}
S \vdash \varphi^I \text{ if, and only if, } A \vdash \varphi
\end{equation}
Then we can obtain an interpretation $J$ of $S'$ in $S$ such that
\begin{equation}
S \vdash \varphi^J \text{ if, and only if, } S' \vdash \varphi
\end{equation}
Since the bi-interpretation $B$ is well founded and $I$ is well founded, $J$ is also well founded. Thus, by Mostowski collapse, we have $J$ is isomorphic with the interpretation $\pair{M, \in}$ with $M$ a transitive class. We note that $M \subseteq V_\omega$ in $S$, for otherwise we would have an infinite member $a$. In turn, this implies in $S'$ the contradiction $\bigcup \{rank(x) \mid x \in a\}$ is inductive.

Notably, as $S'$ is consistent, $V_\omega$ satisfies the predicate $Con(S')$. It follows that $S \vdash Con^{V_\omega}(S')$. But this is absurd, for it would imply the contradiction $S' \vdash Con(S')$. Therefore, the statement (\ref{conserv}) is false. As $A \vdash \varphi$ implies $S \vdash \varphi^I$ by the interpretation, there is a formula $\gamma$ such that $S \vdash \gamma^I$ and $\gamma$ is undecidable in $A$.
\end{proof}

Another venue to consider the problem is to guarantee that the system S realizes whether it is or not an expansion of A under the interpretation.

\begin{prop}
Let $A$ be a consistent recursive extension of PA and $S$ a consistent extension of ZF. We further assume that there is a recursive process $\delta$ definable in S that enumerates formulas that satisfies $S \vdash \varphi^I$ implies $A \vdash \varphi$. Then there is a formula $\varphi$ which is undecidable in $A$ such that $S \vdash \varphi^I$.
\end{prop}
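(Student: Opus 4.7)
The plan is to argue by contradiction, paralleling the proof of Theorem~\ref{thm:expansion} and letting $\delta$ play the structural role that well-foundedness played there. Suppose the conclusion fails; then combined with the consistency of $S$ and the fact that $I$ interprets $A$ in $S$ (so $A \vdash \varphi$ implies $S \vdash \varphi^I$), one forces global conservativity: $S \vdash \varphi^I$ if and only if $A \vdash \varphi$, for every arithmetical $\varphi$. Next I would invoke the Kaye--Wong bi-interpretation of PA with $ZF_{\mathrm{fin}}$ to produce a recursive extension $S'$ of $ZF_{\mathrm{fin}}$ bi-interpretable with $A$, and compose $I$ with this bi-interpretation to obtain a conservative interpretation $J$ of $S'$ in $S$, exactly as in the earlier proof.

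From here the strategy is to use $\delta$ to build inside $S$ a definable model of $A$, mimicking the function of the Mostowski collapse in Theorem~\ref{thm:expansion}. Concretely, $\delta$---being definable in $S$ and enumerating formulas whose $I$-interpreted $S$-provability entails $A$-provability---supplies $S$ with an internal, effective handle on the deductive agreement between the $I$-image of $A$ and $A$ itself. Using this handle, one constructs (either by a Henkin-style argument driven by $\delta$, or by directly formalizing the conservativity relation enumerated by $\delta$) a definable structure inside $S$ whose interpreted theory coincides with $\mathrm{Th}(A)$. Transferring this structure across the bi-interpretation produces a definable model of $S'$ in $S$, hence $S \vdash \mathrm{Con}(S')$; and the conservativity of $J$ then forces $S' \vdash \mathrm{Con}(S')$, the desired Gödelian contradiction.

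The main obstacle is converting $\delta$'s definability into a genuine internal model-construction. Mere definability of a recursively enumerable set of conservativity-witnessing formulas does not automatically yield a uniform $S$-proof of conservativity, let alone a definable model of $A$. One must exploit that $\delta$ is definable by an explicit recursive formula of $S$, combined with $\Sigma_1$-completeness of $S$ and a diagonal argument in the spirit of L\"ob's theorem, to internalize conservativity to the degree required to run the model construction. This delicate step is the analogue, in the present framework, of the careful use of the Mostowski collapse in the proof of Theorem~\ref{thm:expansion}, and it is where the proof is most likely to require care in making the informal idea precise.
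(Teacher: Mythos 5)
There is a genuine gap, and it sits exactly where you flag it yourself: the passage from $\delta$ to a definable model of $A$ inside $S$. The hypothesis on $\delta$ is far weaker than what your construction needs. It only says that $S$ can recursively enumerate --- hence arithmetically express and internally verify --- the implication ``$S \vdash \varphi^I$ implies $A \vdash \varphi$''. That is a purely syntactic, proof-theoretic handle on conservativity; it does not produce a definable structure whose interpreted theory is $Th(A)$, and since the Proposition (unlike Theorem~\ref{thm:expansion}) does not assume $I$ is well founded, there is no Mostowski collapse available to substitute for the missing construction. Your closing step also breaks independently: even granting $S \vdash Con(S')$, to extract $S' \vdash Con(S')$ from conservativity of $J$ you would need $S \vdash (Con(S'))^J$, i.e.\ the $J$-translation of $S'$'s own consistency sentence, and identifying $Con(S')$ as computed in $S$ with $(Con(S'))^J$ is precisely the arithmetic absoluteness that only a well-founded (standard) interpretation supplies. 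So the Theorem~\ref{thm:expansion} template cannot be transplanted here; the point of the Proposition is to replace well-foundedness by $\delta$, not to re-derive it.

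The paper's own proof uses $\delta$ for exactly what it gives and needs no bi-interpretation, no model construction, and no collapse. Assume conservativity, $S \vdash \varphi^I$ if and only if $A \vdash \varphi$. One direction is given by the interpretation itself; the other is enumerated by $\delta$; hence $S$ proves the arithmetization of the biconditional. Conservativity yields $Con(A) \leftrightarrow Con(S)$, and this implication also internalizes in $S$. Since $S$ proves $Con(A)$ (the paper argues this from $S \vdash Con(PA)$ plus compactness for the recursive extension $A$), $S$ would prove $Con(S)$, contradicting G\"odel's second incompleteness theorem. Conservativity therefore fails, and since $A \vdash \varphi$ always gives $S \vdash \varphi^I$, some $\varphi$ undecidable in $A$ has $S \vdash \varphi^I$. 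If you want to salvage your route, you would have to add well-foundedness back as a hypothesis --- at which point you are proving Theorem~\ref{thm:expansion}, not the Proposition.
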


\begin{proof}
Take $\varphi$ such that $PA \nvdash \lnot \varphi$, then $PA \cup \{\varphi\}$ is consistent from completeness theorem in S. Thus, for $S \vdash Con(PA)$, any finite extension $PA + \varphi$ is such that $S \vdash Con(PA + \varphi)$. Further, for A is a recursive extension of PA, we have that any finite subset $\Delta$ of A is such that $S \vdash Con(\Delta)$. So $S \vdash Con(A)$ from compactness theorem.

Lets suppose $I$ is an interpretation of A in S and further that 
\begin{equation}\label{conservative}
S \vdash \varphi^I \text{ if, and only if, } A \vdash \varphi.
\end{equation}
Thus, from the enumeration $\delta$, we may internalize the argument in ($\ref{conservative}$) as $S \vdash \ulcorner(\ref{conservative})\urcorner$ -- since the enumeration of the converse is given by the interpretation.

We note that (\ref{conservative}) implies $Con(A) \see Con(S)$. So, by also internalizing this argument in S, we obtain 
\begin{equation}
S \vdash \ulcorner (\ref{conservative}) \urcorner \so (Con(A) \see Con(S)).
\end{equation}
From G\"odel's incompleteness theorem, we have that $S \nvdash Con(S)$. Therefore
\begin{equation}\label{SdoesnotproveconsistencyofA}
S \nvdash \ulcorner (\ref{conservative}) \urcorner
\end{equation}
And this is a contradiction.

Thus the equation \ref{conservative} is false. However, we know that $A \vdash \varphi$ implies $S \vdash \varphi^I$ from the interpretation. We conclude that there is a formula undecidable $\varphi$ in $A$ such that $S \vdash \varphi^I$.
\end{proof}

A complete answer to the problem is still open. Is it possible to build an interpretation of recursive and consistent extensions S and A such that there is an interpretation of A in S preserving A's truth? We believe not. And the results presented indicate that this may not be possible.

We have seen that interpretations of arithmetic in set theories generally expand what may be taken to be arithmetical truth ($Th(A)  \subsetneqq A^{S}_I$). Yet this expansion is not necessarily complete ($A^{S}_I = $ arithmetic truth). A confusion in this regard is due to the idea that model constructions in set theories offer venues for defining truth for interpreted theories. Each interpretation I represents the appropriate model construction such that the grounding set theory can provide the notion of satisfaction $I \vDash \varphi$ for any formula. Eventually, we would have that for any formula $\gamma$, either $I \vDash \gamma$ or $I \vDash \lnot \gamma$. However, a more syntactical approach make it clear that this is simply the expression of the \textit{excluded middle}. Indeed, ``either $I \vDash \gamma$ or $I \vDash \lnot \gamma$'' should be syntactically represented by the trivial theorem 
\begin{equation}
ZF \vdash \gamma^I \lor \lnot \gamma^I
\end{equation}
Instead, what is really wanted is a notion like
\begin{equation}\label{realComplete}
ZF \vdash \gamma^I \text{ or } ZF \vdash \lnot \gamma^I
\end{equation}

As we suppose a base model $V$ for ZF, we are at hand with a interpretation for ZF itself. In this case, the notion of truth in a model is represented by ``either $I^V \vDash \gamma$ or $I^V \vDash \lnot \gamma$''. However, if our supposition of a model $V$ is not informed by any specific information other than $V \vDash ZF$, the interpretation works simply as the identity. Therefore, we return to the problem of establishing a notion as in (\ref{realComplete}).

Nonetheless, (\ref{realComplete}) is not achievable for any recursive extension of ZF:

\begin{incomplete}
There are formulas $\alpha \in \Lg_{PA}$ that are undecidable under any given interpretation I for any given recursive extension S of ZF.
\end{incomplete}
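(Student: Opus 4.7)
The plan is to apply G\"odel's first incompleteness theorem to the ``pullback'' theory of arithmetical consequences of $S$ under $I$. Assume $S$ is a consistent recursive extension of ZF and $I$ is an interpretation of some arithmetic $A \supseteq$ PA in $S$, and set
\begin{equation*}
T_I = \{\alpha \in \Lg_{PA} \mid S \vdash \alpha^I\}.
\end{equation*}
An arithmetical sentence $\alpha$ is undecidable in $S$ under $I$ precisely when both $\alpha \notin T_I$ and $\lnot \alpha \notin T_I$, so the claim reduces to showing that $T_I$ is incomplete as a theory in $\Lg_{PA}$.

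First I would verify that $T_I$ meets the three hypotheses of the first incompleteness theorem. Recursive enumerability is immediate: $S$ is recursively axiomatised and, by Definition 1, $\alpha \mapsto \alpha^I$ is a computable syntactic operation, so the set of $\alpha$ with $S \vdash \alpha^I$ is r.e. Consistency of $T_I$ follows from consistency of $S$: for a sentence $\alpha$ one has $(\lnot\alpha)^I = \lnot \alpha^I$ (there are no free variables to relativise through $U_I$), so $\alpha \in T_I$ together with $\lnot\alpha \in T_I$ would force $S \vdash \alpha^I \land \lnot \alpha^I$. Finally, $T_I$ extends $Th($PA$)$ by the very definition of interpretation, since $A \vdash \varphi$ implies $S \vdash \varphi^I$ and $A \supseteq $ PA.

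With these three conditions secured, G\"odel's first incompleteness theorem applied to $T_I$ yields a sentence $G \in \Lg_{PA}$ with $T_I \nvdash G$ and $T_I \nvdash \lnot G$; unpacking the definition, this is exactly $S \nvdash G^I$ and $S \nvdash (\lnot G)^I$, so $G$ is the desired witness. I do not anticipate a genuine obstacle: the argument is merely the standard observation that any consistent r.e.\ extension of PA is incomplete, transported across $I$ rather than applied to $A$ itself. The only point worth being explicit about --- and the only thing that could fail --- is the tacit assumption, natural in the context of this section, that $I$ is an interpretation of a theory at least as strong as PA in $S$, so that $T_I$ truly satisfies the strength hypothesis of G\"odel's theorem; for degenerate $I$ (say, one collapsing PA-theoremhood into a single $\Lg_{ZF}$-tautology) the conclusion obviously fails and some such proviso is indispensable.
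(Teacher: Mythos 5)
Your proposal is correct and follows essentially the same route as the paper: both pull the theory back along $I$ to $T_I = \{\alpha \mid S \vdash \alpha^I\}$, observe that it is a consistent, recursively enumerable extension of PA (the paper does this by explicitly exhibiting the recursive proof predicate ``$x$ is a proof in $S$ ending with $I$ of $y$''), and then apply the G\"odel--Rosser incompleteness theorem to obtain an undecidable sentence. Your closing caveat that $I$ must genuinely interpret a theory containing PA is a fair point that the paper leaves implicit.
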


\begin{proof}

To prove this result we should reinternalize the provability predicate under the interpretation. Indeed, if we consider the theory $A = \{\varphi \mid S \vdash \varphi^I\}$, the statement ``x is an axiom'' becomes a semi-recursive predication. Thus it seems that we would not be able to internalize a truth predicate for this new theory.

The point is that we should not internalize the predicate directly for the theory A. Instead, we note that 

\begin{enumerate}
	\item ``x is a proof in S'' is recursive.
	\item ``x is I of a formula in A'' is recursive.

Thus

	\item ``x is a proof in S that ends with I of the y in A'' is recursive.
\end{enumerate}

We call $Pr(x, y)$ the representation of the last statement in A. 

Moreover, this is the specific proof predicate from which we construct the desired provability predicate used in G\"odel's incompleteness theorem. Thus, by applying Rosser's trick and the diagonal lemma, we obtain a formula $G$ that is undecidable in A. Therefore $G$ is undecidable under the interpretation I in the ZF extension S.\footnote{As indicated by Rodrigo Freire, this same result can be obtained by simply applying Craig's theorem on recursively enumerable sets of formulas being recursively axiomatizable \cite{craig1953axiomatizability}.}
\end{proof}

This theorem can be understood as a small extension of G\"odel's incompleteness theorem as we consider decidability under relations between theories. Moreover, it relates to results available in \textit{Satisfaction is not absolute} \cite{hamkins2013satisfaction}. In this article, Hamkins and Yang considered the idea that there may be arithmetical formulas $\rho$ that two models of ZF disagree -- even as these same models agree on what is the standard model for arithmetic. Although very insightful on interesting model constructions, it lacks a construction for the $\rho$ formula. This formula is obtained as the existential for a number representing a formula. In fact, exhibiting $\rho$ is not possible, for it would imply the inconsistency of ZF.

Put another way, we have shown a similar phenomena where the disagreement can be exhibited. To make it possible we considered a foundational view that accommodate our incomplete understanding of set theory and arithmetic. Thus, agreement about arithmetic is to be understood as having similar sets of arithmetical truths $\{\varphi \mid S \vdash \varphi^N\}$, being S some stage (or alternative stage) in the development of ZF. In this sense, there is a formula $\rho$ that would be true in some possible development of S and false in some other possible development of S.

\section{The coordination problem}

Lets consider the following fictional scenario for the development of set theory and arithmetic. There are two group of mathematicians that would decide about new axioms for set theory and arithmetic. The first one $G_s$ is responsible for set theory and the second $G_a$ for arithmetic. Lets further assume that $G_a$ agrees with the standard expansion of arithmetic in ZF ($A^{ZF}_N$ is considered valid for $G_a$). How should we frame the relation between the two groups?

Consider that $G_s$ have decided in favor of new axiom $A$ to set theory ZF. Notably, this would expand the set of arithmetic truth in $A^{ZF + A}_N$. Should $G_a$ consider this new set to be true? This being the general attitude towards arithmetic means that the standard reduction determine new truths for arithmetic. In what sense does, thus, the standard interpretation provides a foundation for new arithmetical truths? 
If we think the standard interpretation does this, it seems like we have simply assumed that arithmetic lives in set theory, without any further considerations. After all, this framework bounds the expansion of arithmetic truth to the expansion of set theoretic truth. Therefore, $G_a$ would have no authority over new arithmetic axioms after all.

In order to make room for this setting, one should consider that we have a better understanding on how arithmetic is reduced to set theory than we have for each of the theories. And, for this to work in general, we should consider the reduction of arithmetic in set theory \textbf{unrevisable}.

Very often we consider ourselves to have a good understanding on relations between things that we may not have a good understanding. This is the case between the translation of a sentence like ``Napoleon was an emperor''. We may have lots of doubts about the ontological status of the words used in this sentence and still be confident about how to translate it to Chinese.

Indeed, we may be more confident about the way we reduce arithmetic to set theory than about truth in those theories. Yet this is not sufficient to assume the unrevisability of the reduction relation. After some investigation over the concept of emperor, one has realized that the standard translation of emperor in Chinese does not really represents what English speakers refer as emperor. For instance, emperor is usually translated as `Huangdi' in Chinese, even though this word associate the monarch with his divinity. In English, although often associated with divinity, the word emperor can be used without divine association. So a more intricate description as `Napoleon was the non-divine man who ruled over the French empire' would be better (even if it is not practical).

If there are grounds for taking $N$ to be a privileged interpretation, those would be based on partial representations of arithmetic and set theory. Therefore, the idea that $N$ correctly works as a connection between the theories may be simply because we haven't advanced the theories enough. This would be a similar case if a Chinese working in the translation of a western modern history book has been translating `Emperor' as `Huangdi'. It seems perfectly fine if he believed this to be a general translation, given that the only time he applied the translation was for the `Emperor of the Holy Roman Empire'. But as he starts translating the Napoleonic period, the broader picture would force him to reconsider the generality of the translation. 

A different picture would be the case where the Chinese translator invented a language where $w$ means `blue chair'. Finding someone else using $w$ to refer to a red chair, he could correctly accuse the person to be using the word incorrectly. So this would be similar to the case where we consider arithmetic to be a definition inside set theory. But this being the case would imply that there is no foundational gain in studying the relation between the theories.

Whereas set theory has a foundational role for arithmetic, we may now consider that the standard interpretation is a good yet revisable set theoretic inspection over arithmetic. It is precisely because we assume the interpretation to be revisable that a foundational relation can be argued. As truth expands in both theories we evaluate conflicts and revise, if necessary, the interpretation to accommodate changes. A summary of the steps in the coordination of $G_a$ and $G_s$ can be:

\begin{enumerate}
	\item Every addition of axioms to one theory should provoke an inspection over the adequacy of the current interpretation of arithmetic in set theory.
	\item If a conflict emerges in the development of the theories, the two groups should meet to adjust the interpretation to prevent the conflict.
	\item The adequacy of an interpretation should have reasons for itself apart from accommodating the interpretation.
\end{enumerate}

As we see in step 2, the two communities should sit together and reevaluate the state of the reduction, if necessary. Hopefully, these conferences would hardly occur. But we should allow some independence to each group. For otherwise their development, especially on arithmetic, would turn to be by definition assumed in the development of the other. 

We have added some life to the grounding relation by allowing it to fail. Nevertheless, there is still a deeper problem. The following scenario is still possible:

\begin{enumerate}
	\item Each instance of the development allows one to fix the interpretation between the theories.
	\item And at least one of the extension of any state of arithmetic is not possibly interpreted in set theory.
\end{enumerate}

Allowing both of these possibilities weakens the edifice of the grounding relation. Each moment in the development of the theories is an incomplete stage in which we cannot anticipate the impossibility of reductions occurring further in the development of the theories. 
From (1), any addition to the theories allows one to find (or keep) an interpretation of arithmetic. However, from (2), finding those interpretations do not add to idea that arithmetic is indeed reducible to a given set theory. This scenario is possible, as we see in the next theorem.

\begin{noninterpretable}
Let S be a consistent extension of ZF and A an recursive extension of PA, then there is an extension $A^*$ of $A$ that is not interpretable in S.
\end{noninterpretable}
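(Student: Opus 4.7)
The plan is to exploit a cardinality mismatch. First I would note that an interpretation $I$ of $\Lg_{PA}$ in $\Lg_S$ is specified by a finite tuple of $\Lg_S$-formulas (a domain predicate $U$ together with formulas interpreting $0$, $s$, $+$ and $\cdot$), so there are at most countably many such interpretations. Enumerate them as $I_0, I_1, \ldots$ and for each $I_n$ set $T_n = \{\varphi \in \Lg_{PA} : S \vdash \varphi^{I_n}\}$. Whenever $I_n$ does interpret $A$ in $S$, the theory $T_n$ extends $Th(A)$ by definition of interpretation, and it is arithmetically consistent because the consistency of $S$ together with the preservation of negation under the translation rules out $S$ proving both $\varphi^{I_n}$ and $\neg \varphi^{I_n}$. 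So the collection $\{T_n\}$ gives at most countably many candidate theories that an interpretable extension of $A$ could realize.

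Against this countable list I would place $2^{\aleph_0}$ complete consistent extensions of $A$, obtained by the standard Lindenbaum binary-tree argument: because $A$ is a consistent recursive extension of PA it is essentially incomplete, so at each finite stage of the construction one can branch on a Gödel-style independent sentence, and each of the continuum many paths through the tree yields a distinct complete consistent extension. Fix a complete consistent extension $A^*$ of $A$ that differs from every $T_n$.

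To finish I would show that this $A^*$ is not interpretable in $S$. Suppose, for contradiction, that some $I_n$ interprets $A^*$ in $S$. Then $I_n$ also interprets $A$, so $T_n$ is consistent by the above, and $Th(A^*) \subseteq T_n$. Completeness of $A^*$ forces the reverse inclusion: for any $\varphi \in T_n$, either $\varphi \in A^*$ or $\neg \varphi \in A^* \subseteq T_n$, and the latter contradicts consistency of $T_n$. Hence $T_n \subseteq A^*$, so $A^* = T_n$, contradicting the choice of $A^*$.

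The main obstacle will not be the counting, which is transparent, but the bookkeeping needed to check that each $T_n$ really behaves as a genuine arithmetical theory whose consistency is inherited from that of $S$. Both the extension claim $T_n \supseteq Th(A)$ and the transmission of consistency rely on the interpretation commuting with the Boolean connectives and on PA-deductive closure of $T_n$; these follow from the definition of interpretation given in the paper, but must be spelled out cleanly before the cardinality argument can be invoked.
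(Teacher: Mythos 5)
Your argument is correct, but it takes a genuinely different route from the paper's. The paper proves this theorem by an explicit diagonalization: it fixes an enumeration $I_1, I_2, \ldots$ of all interpretations and builds a chain $A = A_0 \subseteq A_1 \subseteq \cdots$, where at stage $i$ one defeats $I_i$ either by adjoining the negation of a sentence that $S$ proves under $I_i$ but $A_i$ does not prove, or by adjoining a G\"odel sentence; the union $A^* = \bigcup_i A_i$ then refutes every candidate interpretation directly. You instead run a pure cardinality/pigeonhole argument: only countably many theories of the form $T_n = \{\varphi \mid S \vdash \varphi^{I_n}\}$ can be realized by an interpretation, while the Lindenbaum binary tree gives $2^{\aleph_0}$ pairwise distinct complete consistent extensions of $A$, so some complete extension $A^*$ equals no $T_n$; completeness of $A^*$ plus consistency of $T_n$ then forces $Th(A^*) = T_n$ for any interpreting $I_n$, a contradiction. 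Your approach is in fact essentially the argument the paper reserves for its Theorem~\ref{thm:probabilityzero} (the ``probability zero'' result), and it proves the stronger statement that all but countably many complete consistent extensions of $A$ are non-interpretable in $S$. What it gives up is explicitness: the paper's $A^*$ is produced by a describable recursive-in-the-jump construction, whereas yours exists only by a non-constructive choice from an uncountable family. The bookkeeping points you flag at the end (that $(\lnot\varphi)^{I}$ is $\lnot(\varphi^{I})$ for sentences, so consistency of $S$ transfers to $T_n$, and that $Th(A) \subseteq T_n$ whenever $I_n$ interprets $A$) are exactly the right ones and do follow from the paper's definition of interpretation, so there is no gap.
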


\begin{proof}
We extend the theory A by generating a sequence of theories that are not interpretable in S by a particular interpretation I. Being these theories compatible with each other, the union of them will not be interpretable in S. 

Let $A_0 = A$ and $\{I_1, I_2, \ldots\}$ an enumeration of all interpretations from the language of PA in the language of ZF. (abbreviation: $T \stackrel{J}{\leq} T'$ represents ``$T$ is interpreted in $T'$ by $J$'')
\begin{equation*}
A_{i+1} = 	\begin{cases}
				A_i + \lnot \varphi^{I_i} \text{ in case }
					A_i \stackrel{I_i}{\leq} S, 
					A_i \nvdash \varphi \text{ and }
					S \vdash \varphi^{I_i} \text{ for some } \varphi\\
				A_i + G, \text{ otherwise, being $G$ the G\"odel formula for } A_i
			\end{cases}
\end{equation*}
Let $A^* = \bigcup\limits_{i \in \omega} A_i$. We note that $A^*$ is a consistent extension of $A$. We prove that this theory is not interpretable in $S$. 

Suppose $A^*$ is interpretable by I in S, then $I = I_k$ for some natural number $k$. Notably, if a theory $T$ is interpreted in a theory $T'$, then any subtheory of $T$ is interpreted in $T'$ by the same interpretation. Thus the entire sequence of theories $\{A_1, A_2, \ldots\}$ is interpreted in $S$ by $I_k$. In particular, we have $A_k \stackrel{I_k}{\leq} S$ and that $A_{k+1} = A_i + \lnot \varphi^{I_i}$ or $A_{k+1} = A_k + G$ as in the definition. In the first case, we obtain the contradiction $S \vdash \varphi^{I_k}$ and $S \vdash \lnot \varphi^{I_k}$. In the second, we have either the contradiction $A_k \stackrel{I_k}{\nleq} S$ or that, for all $\alpha$, $A_i \vdash \alpha$ if, and only if, $S \vdash \alpha^I$. But, since $A_i \nvdash G$, it follows $S \nvdash G^{I_k}$ -- which, in turn, implies the contradiction $A_{k+1} \stackrel{I_k}{\nleq} S$.
\end{proof}

We proved that it is possible for the theories ZF and PA to part ways along the path of development. Although disturbing, this may simply account for the meaningfulness of the question about the reduction between the two theories. We have considered that we should conceive it to fail (even fatally, as in this case) in order to not take for granted that the reduction works. Note further that this pays tribute to the idea that by interpreting arithmetic in set theory we should inform something that was not simply given, i.e., that arithmetic lives in the realm of set theory. Nonetheless, the following result should challenge those who are still hopeful that the theories can possibly have a strong grounding relation:

\begin{probabilityzero}
For every consistent extension S of ZF, the probability that a random consistent extension of PA is interpretable in S is zero.
\end{probabilityzero}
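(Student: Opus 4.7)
The plan is to equip the space of complete consistent extensions of the $S$-standard arithmetic $PA_S := \{\varphi \in \Lg_{PA} : S \vdash \varphi^N\}$ with a natural probability measure and show that the interpretable extensions form a set of measure zero. The strategy rests on two facts: there are only countably many interpretations of $\Lg_{PA}$ in $\Lg_{ZF}$, so the collection of ``target'' theories is countable; and the natural measure on completions of $PA_S$ is atomless, so any countable family of complete theories has total measure zero.

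First I would fix a recursive enumeration $\varphi_0, \varphi_1, \ldots$ of all $\Lg_{PA}$-sentences and define the random extension process. Set $T_0 := PA_S$, and at stage $n$, if $T_n$ already decides $\varphi_n$ put $T_{n+1} := T_n$; otherwise flip a fair coin and set $T_{n+1} := T_n + \varphi_n$ or $T_n + \lnot \varphi_n$, each with probability $\tfrac{1}{2}$. Let $T := \bigcup_n T_n$. This process induces a Borel probability measure $\mu$ on the space of complete consistent extensions of $PA_S$, which is the intended formalization of ``a random consistent extension''.

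Next I would observe that any consistent extension $A$ of $PA_S$ interpretable in $S$ via an interpretation $I$ satisfies $A \subseteq T_I$, where $T_I := \{\varphi \in \Lg_{PA} : S \vdash \varphi^I\}$. Since $S$ is consistent and $(\lnot \varphi)^I$ is $\lnot \varphi^I$ for sentences, each $T_I$ is itself consistent. As interpretations are effective finitary objects, only countably many exist, so $\{T_I\}_I$ is a countable family. For a complete consistent $A$ with $A \subseteq T_I$, completeness of $A$ together with consistency of $T_I$ forces $A = T_I$; hence the interpretable complete extensions of $PA_S$ lie inside the countable set $\{T_I\}_I$. It then suffices to show $\mu(\{T\}) = 0$ for every complete consistent $T$: otherwise the random path reaching $T$ would make only finitely many free coin flips, stabilizing at some stage $n_0$ with $T = T_{n_0}$, a consistent recursively axiomatizable extension of $PA$ that decides $\varphi_n$ for $n < n_0$ by construction and for $n \geq n_0$ by the absence of further flips; this would make $T_{n_0}$ complete, contradicting G\"odel's incompleteness theorem.

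The main obstacle is justifying that $\mu$ captures the intended notion of ``random consistent extension''. Although $\mu$ depends on the enumeration, the measure-zero conclusion does not: the Lindenbaum algebra of $PA_S$ is atomless (a direct consequence of G\"odelian incompleteness of every recursive consistent extension), so the Stone space of completions is a Cantor space and the coin-flip construction yields a non-atomic Borel probability measure on it under any enumeration. Combined with the countability of $\{T_I\}_I$ established above, this closes the argument.
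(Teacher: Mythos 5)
Your argument is correct in its essentials but proceeds along a genuinely different route from the paper's. The paper never defines a measure: it builds a binary tree of extensions by iterated G\"odel sentences ($A_{i1} = A_i + G$, $A_{i0} = A_i + \lnot G$), obtains continuum many pairwise incompatible branches, observes that a single interpretation $I$ cannot interpret two mutually inconsistent theories in the consistent $S$ (both would be subtheories of the consistent $T_I$), and concludes ``probability zero'' purely by comparing cardinalities. You instead put an explicit coin-flipping measure on the Stone space of completions, identify the interpretable \emph{complete} extensions with the countable family $\{T_I\}_I$ (via the nice observation that a complete $A \subseteq T_I$ with $T_I$ consistent forces $A = T_I$), and conclude by atomlessness. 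Your version buys actual rigor for the probabilistic claim --- a cardinality comparison alone does not yield measure zero, since a measure can concentrate on a countable set --- at the cost of committing to one formalization of ``random extension'' and to restricting attention to complete extensions (which is necessary: among arbitrary consistent extensions, the subtheories of a single $T_I$ already form a continuum, so countability fails there).

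Two caveats. First, the theorem as stated in the body concerns extensions of $PA$, not of $PA_S$; you have written the proof for the corollary's version. This matters for your atomlessness step: you need every stabilized $T_{n_0}$ to be incomplete by G\"odel, which requires it to be recursively axiomatizable. With $T_0 = PA$ this is automatic, since each $T_{n_0}$ is a finite extension of $PA$. With $T_0 = PA_S$ it fails when $S$ is merely a consistent (possibly non-recursive, even complete) extension of ZF: if $S$ is complete, $PA_S$ is already a complete theory, your space collapses to a single atom, and that atom \emph{is} interpretable via $N$ --- so the conclusion would be false in that formalization. Starting the process at $PA$ repairs this and proves exactly the stated theorem for arbitrary consistent $S$. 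Second, you should make explicit that the limit theory $T$ of your process is complete and consistent (completeness because every $\varphi_n$ is decided by stage $n+1$; consistency by compactness), since the reduction to the countable family $\{T_I\}_I$ depends on completeness.
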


\begin{proof}

Lets consider the set $\Sigma$ of consistent extensions of PA. 

From the incompletness theorem, there is a formula $G$ that is undecidable in PA. Thus both $PA + G$ and $PA + \lnot G$ are consistent.

Notably, this is still true for the addition of any finite number of new axioms $\alpha_1, \alpha_2, \ldots, \alpha_n$. There is a formula $G$ that is undecidable in $PA + \{\alpha_1, \alpha_2, \ldots, \alpha_n\}$. The process of adding axioms continues indefinitely.

Lets then index extensions with binary numbers in the following way:

\begin{enumerate}
	\item $A_0 = PA$.
	\item If $G$ is the G\"odel sentence in $A_i$, then $A_{i1}$ is $A_i + G$ and $A_{i0}$ is $A_i + \lnot G$. ($i1$ and $i0$ are the binary extension of the number $i$ with the digits $1$ and $0$)
	\item $\Sigma = \{A_n \mid n \in \omega\}$ is the set of finite extensions of PA.
\end{enumerate}

Note that each member of $\Sigma$ is a finite extension of PA. Now we include infinite extensions of PA in $\Sigma$. Let $\Pi$ be a set of $\bigcup C$, for each $C$ a subset-chain in $\Sigma$. The index for the members of $(\Pi \setminus \Sigma)$ can be describe by functions $\omega \longrightarrow \{1, 0\}$. Thus, as a simple consequence, the set of indexes of the extensions is in bijection with $P(\omega)$. 

Each theory with infinite sequences as indexes is indeed a different theory, for any difference in the sequence means that one theory has a a formula like $G$ and the other a formula $\lnot G$.

Nonetheless, the number of interpretations is trivially countable. Also, since the same interpretation cannot accommodate incompatible theories, there must be a extension of PA that is not interpretable in the extension S. Moreover, as we are comparing countable possible interpretable extensions with uncountable non-interpretable extensions, the probability of picking a interpretable extension is zero.
\end{proof}

We note that same can be obtained, even if the starting point includes all theorems of the set theory S under the interpretation. Indeed, we can include the theorems under a given interpretation at any point without interfering in the result.

\begin{coro}
For every consistent extension S of ZF, the probability that a random consistent extension of a S-standard version of PA ($\{\phi \mid S \vdash \phi^I\}$, being $I$ the current standard interpretation) is interpretable in S is zero.
\end{coro}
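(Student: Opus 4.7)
The plan is to adapt the proof of Theorem \ref{thm:probabilityzero} essentially verbatim, taking the S-standard theory $T = \{\phi \mid S \vdash \phi^I\}$ as the base in place of PA itself. First I would verify the two ingredients the previous argument required of its base theory: $T$ is a consistent recursively enumerable extension of PA, and this r.e.\ property is preserved under the addition of finitely many axioms. Consistency of $T$ follows from consistency of $S$ via the interpretation $I$, and $T$ is r.e.\ because $S$ is a recursive extension of ZF and $I$ is a fixed interpretation.

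Next I would reconstruct the binary tree of extensions. Set $A_\emptyset = T$; at each finite binary string $\sigma$, pick a G\"odel sentence $G_\sigma$ undecidable in $A_\sigma$, and let $A_{\sigma 0} = A_\sigma + \lnot G_\sigma$ and $A_{\sigma 1} = A_\sigma + G_\sigma$. Taking unions along branches $f \in 2^\omega$ produces a family $\{A_f \mid f \in 2^\omega\}$ of pairwise distinct consistent extensions of $T$: two distinct branches first split at some finite position, and at that point the corresponding theories contain contradictory choices involving the same G\"odel sentence.

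The counting step then proceeds without change. There are only countably many interpretations from the language of PA into the language of ZF, and no single interpretation $J$ can witness the interpretability of two distinct members $A_f, A_g$ of the family, since at the sentence on which they disagree $S$ would be forced to prove both $\phi^J$ and $\lnot \phi^J$, contradicting consistency of $S$. Hence at most countably many of the uncountably many $A_f$ are interpretable in $S$, and the probability that a random one is interpretable is zero.

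The only genuine concern is that G\"odel's incompleteness applies at every node of the tree. This requires $A_\sigma$ to be recursively enumerable, a property inherited from $T$ and preserved by finite extension, so the induction runs without friction. No new ingredient beyond Theorem \ref{thm:probabilityzero} is needed; the corollary is really just a remark that the base of the construction can be freely enlarged to include the full $S$-standard arithmetic.
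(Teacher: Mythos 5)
Your proposal is correct and follows essentially the same route the paper intends: the paper's proof is a one-line instruction to combine Theorem~\ref{thm:incomplete} with the tree construction of Theorem~\ref{thm:probabilityzero}, and you have carried exactly that out. You also correctly isolate the only point needing care --- that the $S$-standard theory $\{\phi \mid S \vdash \phi^I\}$ is merely recursively enumerable rather than given by a recursive axiom set, which is precisely what the appeal to Theorem~\ref{thm:incomplete} (or Craig's trick) is there to handle.
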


To prove this corollary, we need only to include the result in the theorem \ref{thm:incomplete} in the strategy of the last theorem.

Although extensions like $A^+$ are in general not interpretable in S, the process of generating these theories is internalizable in S. Therefore, we may say that $S$ proves the consistency statement for all these extensions. This is not enough to claim a proper foundational relation. The model construction emerging from this type of consistency proof is simply given by the existence of a model as in the Henkin canonical construction. Thus the foundational model one can generate provides little more information than saying that the theory is consistent\footnote{A general survey on the kind of information this Henkin model construction can be viewed in \textit{Translating non Interpretable Theories} \cite{freiretranslating}.}. Therefore, we should not consider those cases as a path to avoid the problem discussed in this section.

Lastly, lets consider a metaphor. Picture the situation in which we have the unstable equilibrium of a sphere on a hill with a very small slope. We would like to say that the appearence of equilibrium represents our intuitions about the reduction between the theories being correct. Indeed, we have put the sphere in a position that appears to be an equilibrium.  As the slope of the hill is very small, our perception of equilibrium works really well. However, even if it takes a long time, it will become evident that the interpretation of PA in ZF is not in equilibrium. 

The ideas developed in the present article, especially in theorem \ref{thm:noninterpretable}, bring attention to the fact that we are talking about an unstable hill. No matter how the sphere appears to be at rest, we know that eventually it will gain traction and fall. The project of using $N$ for grounding arithmetic truth is equivalent to finding the equilibrium peak of the hill. It seems to be a good project as we focus on the movement of the sphere -- but an analysis of the geography of the hill is already sufficient to conclude this hill to be unstable. We should not base our foundational investigations in guarantying that we have the correct interpretation. Instead, we should use the interpretations as it informs about arithmetic concepts and as it considers bundles of arithmetic formulas in the very expressive environment of set theory. The standard interpretation $N$ should not be taken to be \textbf{the relative grounding} of arithmetic in set theory.

\section{Final remarks}

Rather than manipulating models of PA, we considered interpretations of PA in ZF. Our goal was to accommodate the incomplete picture of the set theoretical metatheory into our analysis of the foundations of arithmetic. The ordinal interpretation expands what we may consider true in arithmetic: many undecidable formulas in PA become theorems when examined under the interpretation in ZF. This is a general phenomenon. For every well-founded interpretation of recursive extensions of PA in extensions of ZF, the interpreted version of arithmetic has more theorems than the original. This shows that studying arithmetic inside set theory can be significant. As one consider these interpretations, one is exploring the expansion of arithmetic truth and how the addition of bundles of axioms play out.

We continued by introducing the coordination problem. We considered two independent communities of mathematicians responsible for deciding over new axioms of ZF and PA. Using this setting, we studied the possibility of coordinating PA with PA’s interpretation in ZF. Nonetheless, we proved that it is possible to have extensions of PA not interpretable in a given set theory ST. Moreover, we considered a given recursive extension A of PA and supposed any extension of this theory to be equally likely. Here, we prove that the probability for an extension A be interpretable in ST is zero.

We have, therefore, set a framework to criticize the notion of grounding between theories such as arithmetic and set theory. However, this is not to be understood as a general criticism of the idea of using set theory to investigate foundational matters regarding arithmetic. Instead, we have soley shown that it  may be flawed to assume that set theory really provide grounds for arithmetic truth or a definitive description of the universe of numbers. Our suggestion is therefore to consider a foundational relation that aims primarily at conceptional clarification of the concepts involved in the studied theory. An expressively rich environment such as set theory is armed with tools to study arithmetical relations in wider settings than it would be possible without leaving its deductive apparatus.

\bibliography{referencias} 
\bibliographystyle{plain}
\end{document}